\documentclass[12pt]{amsart}
\usepackage{amssymb}
\usepackage{enumerate}
\usepackage{graphicx}
\usepackage{tikz}
\usepackage{parskip}
\usepackage{fancyhdr}
\makeatletter
\@namedef{subjclassname@2010}{%
  \textup{2010} Mathematics Subject Classification}
\makeatother
\newtheorem{thm}{Theorem}[section]
\newtheorem{corollary}[thm]{Corollary}
\newtheorem{lemma}[thm]{Lemma}
\newtheorem{proposition}[thm]{Proposition}
\theoremstyle{definition}

\newtheorem{remark}[thm]{Remark}
\newtheorem{example}[thm]{Example}
\frenchspacing
\textwidth=13.5cm
\textheight=23cm
\parindent=16pt
\topmargin=-0.5cm
\setlength{\parindent}{0pt}
\pagenumbering{gobble}
\pagestyle{fancy}
\headheight=14pt
\fancyhf{}
\lhead{\scriptsize{Steps in Anderson-Badawi's Conjecture on n-Absorbing and Strongly n-Absorbing Ideals}}
\thispagestyle{fancy}


\begin{document}
\baselineskip=15pt

\title{Steps in Anderson-Badawi's Conjecture on n-Absorbing and Strongly n-Absorbing Ideals}{}
\author[M. Delic]{Matija Delic}
\address{Department of Mathematics,
Massachusetts Institute of Technology, Cambridge,
Massachusetts, USA}
\email{mdelic@mit.edu }
\author[K. Adarbeh]{Khalid Adarbeh $^{(\star)}$}
\address{Department of Mathematics, An-Najah National University, Nablus, Palestine}
\email{khalid.adarbeh@najah.edu}

\thanks{$^{(\star)}$ Corresponding author}
\date{}

\begin{abstract}
This article aims to solve positively Anderson-Badawi Conjecture of n-Absorbing and strongly n-absorbing ideals of commutative rings in the class of u-rings.  The main result extends and recovers Anderson-Badawi's related result on Prufer domains \cite[Corollary 6.9]{AB}.
\end{abstract}

\subjclass[2010]{13A15,  13F05, 13G05.}

\keywords{2-absorbing ideals, n-absorbing ideals, Strongly 2-absorbing ideals, u-rings.}

\maketitle

\section{Introduction}
Throughout this article, $R$ denotes a commutative ring with $1 \neq 0$. In 2007, A. Badawi  introduced the concept of 2-absorbing ideals of commutative rings as a generalization of prime ideals. He defined an ideal $I$ of $R$ to be 2-absorbing if whenever $a, b, c \in R$ and $abc \in I$, then  $ab$ or $ac$ or $bc$ is in $I$ \cite{B1}. As in the case of prime ideals, 2-absorbing have a characterization in terms of ideals. Namely,   $I$  is 2-absorbing if whenever $I_{1}, I_{2}, I_{3}$ are ideals of $R$ and $I_{1}I_{2}I_{3} \subseteq I$, then  $I_{1}I_{2}$ or $I_{1}I_{3}$ or $I_{2}I_{3}$ is contained in $I$ \cite[Theorem 2.13]{B1}.  

In 2011, D.F. Anderson, A. Badawi inspired from the definition of 2-absorbing ideals and defined the n-absobing ideals for any positive integer n. Where an ideal $I$ is called n-absorbing ideal if whenever $x_{1} \dots  x_{n+1} \in I$ for $x_{1}, \dots,  x_{n+1} \in R$  then there are n of the $x_{i}$’s whose product is in I. Also they introduced the strongly-n-absorbing ideals as another generalization of prime ideals, where an ideal $I$ of $R$ is said to be a strongly n-absorbing ideal if whenever $I_{1} \dots · · · I_{n+1} \subseteq  I$ for ideals $I_{1}, \dots, · · ·, I_{n+1}$ of $R$, then the product of some n of the $I_{j}$’s is contained in $I$. Obviously,  a strongly n-
absorbing ideal of $R$ is also an n-absorbing ideal of $R$, and by the last fact in the previous paragraph, 2-absorbing and strongly 2 absorbing are the same. Moreover  D.F. Anderson, A. Badawi  were able to prove that n-absorbing and strongly n-absorbing are equivalent in the class of Prufer domains \cite[Corollary 6.9]{AB}, and they  conjectured that these
two concepts are equivalent in any commutative ring \cite[Conjecture 1]{AB}.  

In 1975, Jr. P. Quartararo  and H.S. Butts defined  the  u-rings to be those rings in which if an ideal $I$ is contained in the union of ideals, then it must be contained in one of them. Then they proved that it suffices to consider the case $I$ is finitely generated ideal of $R$ \cite[Proposition 1.1]{QB}. Moreover, in \cite[Corollary 1.6]{QB}, they proved that the class of Prufer domains (domains in which every finitely generated ideal is invertible) is contained in the class of u-rings. So we have the following diagram of implications:
\begin{center}
    Prufer domains
    
    $\Downarrow$
    
    u-rings
\end{center}
where the implication is irreversible in general; see Example \ref{e:1} for a u-ring which is not a domain, particularly, not a Prufer domain.  

In section one of this paper, we provide an alternative proof of \cite[Theorem 2.13]{B1}. The technique of this proof helps in proving the main result of Section 2. In section 2, we solve positively  Anderson-Badawi's Conjecture of n-Absorbing and strongly n-absorbing ideals in the class of u-rings.  The main result (Theorem \ref{T:2})  extends and recovers Anderson-Badawi's related result on Prufer domains (Corollary \ref{c:1}).
\section{Alternative proof}

As we mentioned in the introduction, 2-absorbing ideals and strongly 2-absorbing are the same. This  follows trivially from \cite[Theorem 2.13]{B1}.  In this section, we present an alternative proof of \cite[Theorem 2.13]{B1}, which inspires us in solving \cite[Conjecture 1]{AB} in the class of u-rings. For the seek of completeness, We provide the proof of the following lemma; which can be found as an exercise in the classical ring theory texts.
\begin{lemma} \label{l:1}
Let  $I$ be an ideal of $R$. If  $I=I_1\cup I_2$, where $I_1$ and $I_2$ are also ideals, then $I=I_1$ or $I=I_2$. 
\end{lemma}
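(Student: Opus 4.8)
The plan is to argue by contradiction, exploiting the fact that a union of two ideals that happens to be an ideal must absorb addition. Suppose $I\neq I_1$ and $I\neq I_2$. Since $I_1,I_2\subseteq I_1\cup I_2=I$, the two inclusions $I_1\subseteq I$ and $I_2\subseteq I$ are then strict, so I can choose elements $a\in I\setminus I_1$ and $b\in I\setminus I_2$. The first observation is a pigeonhole step: because $a\in I=I_1\cup I_2$ and $a\notin I_1$, we must have $a\in I_2$; symmetrically, $b\in I_1$.

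The key step is to test the sum $a+b$. Since $a,b\in I$ and $I$ is an ideal, $a+b\in I=I_1\cup I_2$, so $a+b$ lies in $I_1$ or in $I_2$. If $a+b\in I_1$, then from $b\in I_1$ and closure under subtraction we get $a=(a+b)-b\in I_1$, contradicting the choice of $a$. If instead $a+b\in I_2$, then from $a\in I_2$ we get $b=(a+b)-a\in I_2$, contradicting the choice of $b$. Either alternative is impossible, so the assumption fails and $I=I_1$ or $I=I_2$.

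I do not expect any real obstacle here: the argument uses nothing about $R$ beyond it being a commutative ring, and nothing about $I$ beyond closure under addition (to land $a+b$ back in the union). This is exactly the two-ideal base case of the classical prime avoidance lemma, and the only subtlety worth stating carefully is the pigeonhole observation that an element of $I_1\cup I_2$ missing from one of the two ideals necessarily sits in the other.
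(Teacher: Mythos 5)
Your proof is correct and is essentially the same argument as the paper's: pick one element from each set-difference $I_1\setminus I_2$ and $I_2\setminus I_1$ (you phrase these as $I\setminus I_1$ and $I\setminus I_2$, which coincide by your pigeonhole remark), observe that $a+b$ must land in one of the two ideals, and subtract to get a contradiction in either case. The only cosmetic difference is that the paper dispatches the two cases with a ``without loss of generality,'' while you write both out explicitly.
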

\begin{proof}
Suppose $I_1\setminus I_2$ and $I_2\setminus I_1$ are nonempty. Let $a\in I_1\setminus I_2$ and $b\in I_2\setminus I_1$. Since $I_1\cup I_2$ is ideal,  $a+b\in I_1\cup I_2$.  Assume, without loss of generality, that $a+b\in I_1$. Then $b=(a+b)-a\in I_1$, a contradiction. Therefore, either  $I_1\setminus I_2 = \phi$ or $I_2\setminus I_1 = \phi$; equivalently, $I_1 \subseteq I_2$ or $I_2 \subseteq I_1$.
So that $I=I_1$ or $I=I_2$.
\end{proof}
Now, we prove a few lemmas in a sequence, finishing with the proof of the theorem.
\begin{lemma} \label{l:2}
Suppose that $I$ is a 2-absorbing ideal of $R$, $J$ is an ideal of $R$ and $xyJ\subseteq I$ for some $x, y \in R$. Then $xy\in I$ or $xJ\subseteq I$ or $yJ\subseteq I$.
\end{lemma}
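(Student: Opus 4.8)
The plan is to reduce the ideal-containment statement to the element-wise definition of a 2-absorbing ideal by using Lemma \ref{l:1} together with a covering argument. First I would set up the three candidate ideals one wants to cover $J$ with: for fixed $x,y$ with $xyJ\subseteq I$, consider, for each $j\in J$, the hypothesis $xyj\in I$, which by the definition of 2-absorbing gives $xy\in I$ or $xj\in I$ or $yj\in I$. If $xy\in I$ we are done, so assume $xy\notin I$; then every $j\in J$ lies in $\{j\in R: xj\in I\}$ or in $\{j\in R: yj\in I\}$. These two sets are ideals of $R$ — call them $K_x=(I:x)$ and $K_y=(I:y)$ — so what we have shown is $J\subseteq K_x\cup K_y$, equivalently $J=(J\cap K_x)\cup(J\cap K_y)$, a union of two ideals of $R$.

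Next I would invoke Lemma \ref{l:1} on the ideal $J$ written as $(J\cap K_x)\cup(J\cap K_y)$: it forces $J=J\cap K_x$ or $J=J\cap K_y$, i.e. $J\subseteq K_x$ or $J\subseteq K_y$. Unwinding the definitions, $J\subseteq K_x$ says $xj\in I$ for all $j\in J$, hence $xJ\subseteq I$; similarly $J\subseteq K_y$ gives $yJ\subseteq I$. Together with the discarded case $xy\in I$ this is exactly the trichotomy claimed, so the proof is complete.

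The only genuinely delicate point is the application of Lemma \ref{l:1}: it requires the covering of $J$ to be by \emph{ideals}, which is why one must pass from the bare sets $K_x,K_y$ (or rather from $J$) to the honest ideals $J\cap K_x$ and $J\cap K_y$ and check that their union is all of $J$ — this is immediate from the pointwise dichotomy above. Everything else is bookkeeping: verifying that $(I:x)$ and $(I:y)$ are ideals and that $J\subseteq (I:x)$ translates back to $xJ\subseteq I$. I do not expect any real obstacle here; the value of the argument is that this same "cover $J$ by two colon ideals, then collapse the union" mechanism is what will later be iterated, via the u-ring hypothesis, to handle the $n$-absorbing case in Section 2.
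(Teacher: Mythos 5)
Your argument is correct and is essentially identical to the paper's: your $J\cap(I:x)$ and $J\cap(I:y)$ are exactly the sets $J_x=\{z\in J : xz\in I\}$ and $J_y=\{z\in J : yz\in I\}$ used there, and both proofs conclude by covering $J$ with these two ideals and collapsing the union via Lemma \ref{l:1}.
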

\begin{proof}
Suppose $xy\not\in I$. Denote by $J_x=\{z\in J \ |\ xz\in I\}$ and $J_y=\{z\in J\ |\ yz\in I\}$. It is not hard to show that $J_x$ and $J_y$ are ideals. Now, if $a \in J$, then $xya \in I$. But $I$ being 2-absorbing and $xy\not\in I$ imply that $xa \in I$ or $ya \in I$.  Thus, either $a \in J_x$ or $a \in J_y$, and hence $J=J_x\cup J_y$. Therefore, by Lemma \ref{l:1}, either $J=J_x$, and hence $xJ\subseteq I$ or $J=J_y$, and hence $yJ\subseteq I$.
\end{proof}
We generalize the previous lemma as follows.
\begin{lemma} \label{l:3}
Suppose that $I$ is a 2-absorbing ideal of $R$, $I_1$ and $I_2$ are ideals of $R$, and  $xI_1I_2\subseteq I$ for some $x \in R$. Then $xI_1\subseteq I$ or $xI_2\subseteq I$ or $I_1I_2\subseteq I$.
\end{lemma}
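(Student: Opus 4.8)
The plan is to reduce to Lemma \ref{l:2} by fixing one element of $I_1$ at a time, and then to rerun the union-of-ideals argument that powered Lemma \ref{l:2}, but now one level up --- at the level of ideals rather than elements.

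First I would fix an arbitrary $a \in I_1$. Since $xI_1I_2 \subseteq I$, in particular $x\,a\,I_2 \subseteq I$, which is precisely the hypothesis of Lemma \ref{l:2} with the role of $y$ played by $a$ and the role of $J$ played by $I_2$. Applying that lemma yields, for this $a$, that at least one of $xa \in I$, $xI_2 \subseteq I$, or $aI_2 \subseteq I$ holds. The middle alternative does not depend on $a$, so I would dispose of it once and for all: if $xI_2 \subseteq I$ then we are already in the desired trichotomy, and we may assume $xI_2 \not\subseteq I$. Then for every $a \in I_1$ we have $xa \in I$ or $aI_2 \subseteq I$.

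Next I would set $A = \{a \in I_1 \ |\ xa \in I\}$ and $B = \{a \in I_1 \ |\ aI_2 \subseteq I\}$. A routine check --- using that $I$ is an ideal, and for $B$ the inclusion $(a+a')I_2 \subseteq aI_2 + a'I_2$ --- shows that $A$ and $B$ are ideals of $R$ contained in $I_1$, and the previous paragraph gives $I_1 = A \cup B$. Now Lemma \ref{l:1} forces $I_1 = A$ or $I_1 = B$. In the first case $xI_1 \subseteq I$ by the definition of $A$; in the second case $I_1I_2 \subseteq I$ by the definition of $B$. Either way we reach one of the three stated conclusions.

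I do not expect a serious obstacle here: the argument mirrors Lemma \ref{l:2} closely, with elements promoted to ideals. The only points requiring care are the verification that $A$ and $B$ are genuinely ideals, and the discipline of peeling off the case $xI_2 \subseteq I$ \emph{before} forming the union, so that Lemma \ref{l:1} is applied to an honest two-term union of ideals.
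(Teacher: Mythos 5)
Your proposal is correct and follows essentially the same route as the paper: apply Lemma \ref{l:2} elementwise to each $a\in I_1$, dispose of the case $xI_2\subseteq I$ up front, write $I_1$ as the union of the two ideals $\{a\in I_1 \mid xa\in I\}$ and $\{a\in I_1\mid aI_2\subseteq I\}$, and invoke Lemma \ref{l:1}. The only difference is that you spell out the verification that these sets are ideals, which the paper leaves implicit.
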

\begin{proof}
Suppose $xI_2\not\subseteq I$. By Lemma \ref{l:2}, for all $y\in I_1$, either $xy\in I$ or $yI_2\subseteq I$. Let $N=\{y\in I_1 \ | \ xy\in I\}$ and $M=\{y\in I_1\ |\ yI_2\subseteq I\}$. Then $M$ and $N$ are ideals of $R$, and simlirly as in the proof of Lemma \ref{l:2}, $I_1=N\cup M$. Thus, again  by Lemma \ref{l:1},  either $I_1=N$, and in this case  $xI_1\subseteq I$, or $I_1=M$, and in this case  $I_1I_2\subseteq I$.
\end{proof}
Finally, we use the above lemmas to prove the main theorem.
\begin{thm} \cite[Theorem 2.13]{B1} \label{T:1}
An ideal $I$ of $R$ is 2-absorbing ideal if and only if it is strongly 2-absorbing ideal. 
\end{thm}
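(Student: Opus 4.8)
The plan is to prove the two implications separately, the forward one being essentially free. If $I$ is strongly $2$-absorbing and $a,b,c\in R$ satisfy $abc\in I$, I would apply the strong $2$-absorbing hypothesis to the principal ideals $(a),(b),(c)$, whose product satisfies $(a)(b)(c)=(abc)\subseteq I$; this yields $(ab)\subseteq I$ or $(ac)\subseteq I$ or $(bc)\subseteq I$, i.e.\ one of $ab,ac,bc$ lies in $I$. So the content is in the converse.

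For the converse, suppose $I$ is $2$-absorbing and let $I_1,I_2,I_3$ be ideals of $R$ with $I_1I_2I_3\subseteq I$; I want one of the three pairwise products contained in $I$. First I would peel off $I_1$ elementwise: for any fixed $x\in I_1$ we have $xI_2I_3\subseteq I$, so Lemma \ref{l:3} gives $xI_2\subseteq I$, or $xI_3\subseteq I$, or $I_2I_3\subseteq I$. If the last alternative ever holds we are already done, so I would assume $I_2I_3\not\subseteq I$; then for \emph{every} $x\in I_1$ we get $xI_2\subseteq I$ or $xI_3\subseteq I$.

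Next I would introduce $A=\{x\in I_1 : xI_2\subseteq I\}$ and $B=\{x\in I_1 : xI_3\subseteq I\}$. A routine verification (closure under addition and under multiplication by arbitrary elements of $R$, using that $I$ is an ideal) shows $A$ and $B$ are ideals of $R$, and the previous paragraph says $I_1=A\cup B$. Now Lemma \ref{l:1} forces $I_1=A$ or $I_1=B$; in the first case $I_1I_2\subseteq I$, in the second $I_1I_3\subseteq I$. Either way the strong $2$-absorbing condition is satisfied, finishing the proof.

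I do not anticipate a genuine obstacle: the argument is a two-layer iteration of the ``split a union of ideals'' device already used in Lemmas \ref{l:2} and \ref{l:3}, first reducing the ideal $I_1$ to its elements and then collapsing the resulting dichotomy through Lemma \ref{l:1}. The only points needing minor care are checking that $A$ and $B$ are ideals and cleanly disposing of the case $I_2I_3\subseteq I$ before forming the union. This is precisely the template that Section 2 generalizes to $n$-absorbing ideals, with Lemma \ref{l:1} replaced by the defining property of a $u$-ring.
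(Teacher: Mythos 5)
Your proposal is correct and follows essentially the same route as the paper: apply Lemma \ref{l:3} elementwise to $x\in I_1$, assume $I_2I_3\not\subseteq I$, write $I_1$ as the union of the two ideals $\{x\in I_1 : xI_2\subseteq I\}$ and $\{x\in I_1 : xI_3\subseteq I\}$, and collapse the union via Lemma \ref{l:1}. Your spelled-out forward direction via principal ideals is the standard observation the paper dismisses as obvious, and your closing remark about the template generalizing to Section 2 matches the paper's own framing.
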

\begin{proof}
Obviously, strongly 2-absorbing ideals are 2-absorbing. Conversely, Assume that $I$ is 2-absorbing and $I_1I_2I_3 \subseteq I$, where $I_1$, $I_2$, and $I_3$ are ideals of $R$. Further, Suppose $I_2I_3\not\subseteq I$, and let $N=\{x\in I_1 \ | \ xI_2\subseteq I\}$ and $M=\{x\in I_1 \ |\ xI_3\subseteq I\}$. Then $M$ and $N$ are ideals. By Lemma \ref{l:3}, all $x\in I_1$ are in either $N$ or $M$, and thus $I_1=N\cup M$.
Therefore by Lemma \ref{l:1}, either $I=N$ or $I=M$; which implies that $I_1I_2\subseteq I$ or $I_1I_3\subseteq I$.
\end{proof}

\section{The conjecture}
The following conjecture was announced in \cite{AB}.\\
\textbf{Anderson and Badawi's conjecture:} In every ring, the notions of $n$-absorbing ideals and strongly $n$-absorbing ideals are equivalent.\\

It is easy to see that strongly $n$-absorbing ideals are $n$-absorbing. We aim to find conditions for the converse to hold. We adopt the following terminology from \cite{G} and \cite{QB}: If $I_1,...,I_n$ are ideals of $R$, then   $I_1\cup ...\cup I_n$ is called an efficient covering of $I$ if $I\subseteq I_1\cup ...\cup I_n$, but $I$ is not contained in the union of any $n-1$ of these ideals \cite{G}. In view of this definition, an ideal $I$ of $R$  is called a u-ideal if there is no efficient covering of $I$ with more then one ideal.\\

The following result  solves Anderson and Badawi's conjecture to u-rings,  generalizing thus Corollary 6.9 from \cite{AB}. 
\begin{thm}\label{T:2}
In a $u$-ring, an $n$-absorbing ideal is strongly $n$-absorbing.
\end{thm}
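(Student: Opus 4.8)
The plan is to carry out, for general $n$, exactly the peeling strategy used in Section~2 for $n=2$. Since strongly $n$-absorbing trivially implies $n$-absorbing, only the converse needs proof, and I would obtain it as the last instance of the following mixed statement, proved by induction on $t$: \emph{if $I$ is an $n$-absorbing ideal, $a_1,\dots,a_s\in R$, $J_1,\dots,J_t$ are ideals of $R$, $s+t=n+1$, and $a_1\cdots a_s\,J_1\cdots J_t\subseteq I$, then at least one of the $n+1$ products obtained by deleting a single factor (some $a_i$, or some $J_j$) is already contained in $I$.} The case $t=0$ is nothing but the definition of an $n$-absorbing ideal, so it serves as the base of the induction, while the case $s=0$, $t=n+1$ is precisely the assertion that $I$ is strongly $n$-absorbing, so Theorem~\ref{T:2} sits at the top of the induction. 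For $n=2$ this statement specializes to Lemmas~\ref{l:2}, \ref{l:3} and Theorem~\ref{T:1}, which are exactly the instances $(s,t)=(2,1),(1,2),(0,3)$.

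For the inductive step, assume $t\ge 1$ and fix an arbitrary $y\in J_t$. Then $a_1\cdots a_s\,y\,J_1\cdots J_{t-1}\subseteq I$ is a product of $s+1$ elements and $t-1$ ideals, so the induction hypothesis applies to it. If the deletion of the factor $y$ already yields containment, i.e.\ $a_1\cdots a_s\,J_1\cdots J_{t-1}\subseteq I$, then we are done immediately (note that this condition is independent of $y$); otherwise the hypothesis forces every $y\in J_t$ into one of the sets
\[
B_i=\{\,z\in J_t : (a_1\cdots \widehat{a_i}\cdots a_s)\,z\,J_1\cdots J_{t-1}\subseteq I\,\}\quad(1\le i\le s),
\]
\[
C_j=\{\,z\in J_t : (a_1\cdots a_s)\,z\,J_1\cdots \widehat{J_j}\cdots J_{t-1}\subseteq I\,\}\quad(1\le j\le t-1).
\]
Each of these is of the form $J_t\cap(I:K)$ for a fixed ideal $K$, hence an ideal of $R$, and together they cover $J_t$; there are $s+(t-1)=n$ of them. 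Since $R$ is a $u$-ring and $J_t$ is contained in this finite union of ideals, $J_t$ must lie inside one of them, say $J_t\subseteq B_i$ (the case $J_t\subseteq C_j$ is identical). But $J_t\subseteq B_i$ says exactly $(a_1\cdots \widehat{a_i}\cdots a_s)\,J_1\cdots J_{t-1}\,J_t\subseteq I$, which is the product obtained from the original by deleting $a_i$; this closes the induction.

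The points that require care are, first, checking that the $B_i$ and $C_j$ are genuinely ideals and that they really exhaust $J_t$ --- here it matters that the induction hypothesis is invoked for \emph{every} $y\in J_t$ and that the ``delete $y$'' alternative has been disposed of separately beforehand, exactly as in the proofs of Lemmas~\ref{l:2} and \ref{l:3}. The essential new ingredient, and the reason this argument needs the hypothesis on $R$ once $n\ge 3$, is that the covering of $J_t$ may now involve as many as $n$ ideals rather than just two: Lemma~\ref{l:1} collapses a covering by two ideals in any ring, but collapsing a covering by $n\ge 3$ ideals to a single member is exactly what the definition of a $u$-ring supplies. I expect this to be the only real obstacle; once the mixed statement above is isolated, the proof is a straightforward induction powered by the same mechanism as Section~2.
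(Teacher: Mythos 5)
Your proposal is correct and follows essentially the same route as the paper: your mixed element/ideal statement is precisely the paper's Lemma \ref{l:22} (with the elements $a_i$ replaced by the principal ideals they generate), proved by the same one-factor-at-a-time induction, with the same colon-ideal covering of the last ideal factor collapsed by the $u$-ring hypothesis. The differences are purely notational.
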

In order to prove this main theorem, we prove the following four lemmas:

\begin{lemma} \label{l:21}
 A principal ideal is a u-ideal.
\end{lemma}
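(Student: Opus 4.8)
The plan is to show that a principal ideal $I = (a)$ cannot have an efficient covering by more than one ideal; equivalently, whenever $(a) \subseteq I_1 \cup \dots \cup I_n$ for ideals $I_1, \dots, I_n$ of $R$, then $(a) \subseteq I_j$ for some single $j$. The natural first step is to reduce to the generator: since $a \in (a)$, the hypothesis gives $a \in I_j$ for some $j$; but then the whole principal ideal $(a) = Ra$ is contained in $I_j$ because $I_j$ is an ideal (closed under multiplication by ring elements). Thus a single one of the $I_j$ already covers $(a)$, so no covering by $n \geq 2$ ideals can be efficient.

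The key steps, in order, are: first, unwind the definition of u-ideal — it suffices to rule out efficient coverings, and an efficient covering with more than one ideal would in particular be a covering; second, observe that membership of the single generator $a$ in a union of ideals immediately places $a$ in one of them; third, use the ideal property to promote $a \in I_j$ to $Ra = (a) \subseteq I_j$; fourth, conclude that the index $j$ alone suffices, contradicting the requirement in the definition of efficient covering that $I$ not be contained in the union of any $n-1$ of the ideals (here, in any proper subfamily, in fact in a single member). One should phrase the final line carefully in terms of the paper's definition: there is no efficient covering of $(a)$ with more than one ideal, hence $(a)$ is a u-ideal.

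I do not expect any genuine obstacle here; the statement is essentially the observation that cyclic modules are ``covered'' by whichever submodule contains the cyclic generator. The only thing to be careful about is bookkeeping with the definition: ``u-ideal'' is defined via the nonexistence of efficient coverings with more than one ideal, so the write-up should explicitly note that the existence of a single covering ideal $I_j$ prevents any covering involving $j$ from being efficient, and that $j$ can always be found. No lemma from earlier in the excerpt is even needed — Lemma \ref{l:1} is the analogous fact for a union of two ideals equal to $I$, but here the argument via the generator is more direct and works for any finite number of ideals at once.
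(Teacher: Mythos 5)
Your proposal is correct and follows exactly the same argument as the paper: the generator $a$ of $(a)$ must lie in some $I_j$, whence $(a) = Ra \subseteq I_j$, so no covering by more than one ideal can be efficient. The extra bookkeeping you describe about the definition of efficient covering is fine but not needed beyond what the paper already states.
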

\begin{proof}
Say $I\subseteq I_1\cup...\cup I_n$, and $I=(x)$. Then for some $j$, $x\in I_j$ so $I\subseteq I_j$.
\end{proof}

\begin{lemma}\label{l:22}
Let $I$ be an $n$-absorbing ideal of $R$, and $I_1, ..., I_{n+1}$ be  $u$-ideals of $R$.
Suppose that the following condition is satisfied:\\

    whenever $I_1\cdots I_{n+1}\subseteq I$, and at least $k+1$ of the ideals $I_1,...,I_{n+1}$ are principal, then $I$ contains a product of some $n$ of them.\\
    
Then the same holds when we replace $k+1$ with $k$. Here $n\geq k\geq 0$.
\end{lemma}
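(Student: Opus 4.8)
The statement is an induction step that lets us reduce the number of principal ideals appearing in a product contained in $I$ by one. So the plan is to prove: if the ``$k+1$ principal'' version holds, then the ``$k$ principal'' version holds. Suppose $I_1\cdots I_{n+1}\subseteq I$ with at least $k$ of the ideals, say $I_1,\dots,I_k$, principal. If in fact $k+1$ or more of them are principal we are done by hypothesis, so assume exactly $k$ are principal, and we must show $I$ contains a product of $n$ of them. The natural move, mimicking the proofs of Lemmas \ref{l:2}, \ref{l:3}, and Theorem \ref{T:1}, is to ``chip away'' at one of the non-principal ideals, say $I_{n+1}$, by replacing it with principal ideals $(x)$ for $x\in I_{n+1}$: for each such $x$ we have $I_1\cdots I_n\cdot(x)\subseteq I$, a product with $k+1$ principal factors, so by hypothesis $I$ contains a product of $n$ of these $n+1$ ideals.

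**Extracting a covering.** For each $x\in I_{n+1}$, the hypothesis yields a choice of $n$-element subproduct lying in $I$. There are finitely many subsets of $\{I_1,\dots,I_n,(x)\}$ of size $n$; group the elements $x$ according to which subproduct works. Two cases: the subproduct is $I_1\cdots I_n$ (the one not involving $(x)$), in which case $I_1\cdots I_n\subseteq I$ and we are immediately done; or the subproduct omits exactly one $I_j$ ($1\le j\le n$) and includes $(x)$, i.e. $\left(\prod_{i\neq j, i\le n} I_i\right)(x)\subseteq I$. For each fixed $j$, set
\[
  A_j=\Bigl\{\,x\in I_{n+1}\ \Big|\ \Bigl(\textstyle\prod_{i\neq j,\, i\le n} I_i\Bigr)(x)\subseteq I\,\Bigr\}.
\]
As in the earlier lemmas, each $A_j$ is an ideal of $R$, and every $x\in I_{n+1}$ lies in some $A_j$ (or we have already finished). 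Hence $I_{n+1}=A_1\cup\cdots\cup A_n$.

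**Using the u-ideal property.** Now $I_{n+1}$ is a $u$-ideal, so it is not efficiently covered by more than one ideal; hence $I_{n+1}=A_j$ for some single $j$, which means $\left(\prod_{i\neq j,\, i\le n} I_i\right)I_{n+1}\subseteq I$ — a product of $n$ of the original ideals $I_1,\dots,I_{n+1}$ contained in $I$, as desired. This completes the induction step. (One should note the degenerate case $n=k$: then ``$k+1$ of them principal'' forces all of $I_1,\dots,I_{n+1}$ to be handled, but the argument above is uniform; and when $k=0$ the hypothesis about ``$k+1$ principal'' is genuinely weaker, which is exactly the point.)

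**Anticipated obstacle.** The only delicate point is the step ``$I_{n+1}=A_1\cup\cdots\cup A_n$ forces $I_{n+1}=A_j$ for some $j$.'' The definition of $u$-ideal rules out efficient coverings with more than one ideal, so from the covering $I_{n+1}\subseteq A_1\cup\cdots\cup A_n$ one must pass to a subcovering that is either trivial (contained in a single $A_j$, done) or efficient with at least two members (forbidden). Making this reduction clean — discarding redundant $A_j$'s one at a time until the covering is efficient or has a single member — is the crux, and it is precisely the role Lemma \ref{l:1} played in the $2$-absorbing case. I would state this reduction explicitly, perhaps as the sentence ``any covering of a $u$-ideal can be refined to one by a single ideal,'' and then apply it. Verifying that each $A_j$ is an ideal is routine and parallels the verifications in Lemmas \ref{l:2} and \ref{l:3}, so I would dispatch it in one line.
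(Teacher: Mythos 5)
Your proposal is correct and follows essentially the same route as the paper's proof: you replace the non-principal factor $I_{n+1}$ by principal ideals $(x)$, apply the $k+1$ hypothesis, collect the elements $x$ into the ideals $A_j$ (the paper's $J_i$), and invoke the $u$-ideal property to collapse the covering $I_{n+1}=A_1\cup\cdots\cup A_n$ to a single $A_j$. Your version is in fact slightly more careful than the paper's, since you explicitly separate off the case $I_1\cdots I_n\subseteq I$ and spell out the refinement from an arbitrary covering to an efficient one.
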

\begin{proof}
Assume the statement is true for $I$ and $k+1$. Let $I_1\cdots I_{n+1}\subseteq I$, where $I_j$ is principal for $j\leq k$. Assume $\prod_{j\leq n}I_j \not\subseteq I$. For all $i\leq n$, let
$$J_i=\{y\in I_{n+1}\ |\quad y\prod_{j\neq n+1, i}I_j \subseteq I\}$$ 
Then by our assumption, $I_{n+1}=\cup_{i\leq n}J_i$. Since it is a $u$-ideal, it is equal to some $J_i$. But then 
$$\prod_{j\neq i}I_j\subseteq I$$
This concludes the proof.
\end{proof}

\begin{lemma} \label{l:23}
Let $I$ be an $n$-absorbing ideal. If $I_1\cdots I_{n+1}\subseteq I$, where every $I_j$ is a $u$-ideal, then $I$ contains the product of some $n$ of these ideals.

\end{lemma}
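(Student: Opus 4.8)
The plan is to prove Lemma \ref{l:23} by induction on the number of non-principal ideals among $I_1,\dots,I_{n+1}$, using Lemma \ref{l:22} as the inductive engine. First I would observe that the hypothesis of Lemma \ref{l:22} holds trivially when $k = n+1$: if \emph{all} $n+1$ ideals $I_1,\dots,I_{n+1}$ are principal, say $I_j = (x_j)$, then $x_1\cdots x_{n+1}\in I_1\cdots I_{n+1}\subseteq I$, and since $I$ is $n$-absorbing the product of some $n$ of the $x_j$'s lies in $I$, whence the product of the corresponding $n$ ideals is contained in $I$. (Strictly, Lemma \ref{l:22} as stated allows $k$ only up to $n$, so I would apply it with the base case $k+1 = n+1$ understood as this trivial observation, or equivalently run the descent from $k = n$ down to $k = 0$ after establishing the $k = n$ case directly in the same one-line manner.)

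Next I would apply Lemma \ref{l:22} repeatedly: starting from the statement for $k = n+1$ (all principal), descend to $k = n$, then $k = n-1$, and so on down to $k = 0$. Each application is legitimate because the hypotheses of Lemma \ref{l:22} require exactly that $I_1,\dots,I_{n+1}$ be $u$-ideals, which is given. After $n+1$ applications we obtain the statement for $k = 0$: whenever $I_1\cdots I_{n+1}\subseteq I$ with $I_1,\dots,I_{n+1}$ $u$-ideals and \emph{at least $0$} of them principal — i.e.\ with no restriction at all — then $I$ contains a product of $n$ of them. This is precisely the conclusion of Lemma \ref{l:23}, so the proof is complete.

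The argument is short and the steps are all routine once the structure is seen; the only subtlety is the bookkeeping at the top of the induction. The main thing to get right is the base case: one must check that ``$k+1$ of the ideals are principal'' makes sense and is non-vacuous at the start of the descent, which forces one to handle the all-principal case ($k+1 = n+1$) by hand via the very definition of $n$-absorbing. I expect this to be the only place where any real content enters; everything after that is mechanical iteration of Lemma \ref{l:22}. I would therefore write the proof as: ``We induct downward. For $k = n+1$ the hypothesis holds since $I$ is $n$-absorbing [one line]. By Lemma \ref{l:22}, it then holds for $k = n, n-1,\dots,0$. The case $k = 0$ is the assertion.''
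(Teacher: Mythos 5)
Your proposal is correct and follows essentially the same route as the paper: establish the all-principal case directly from the definition of $n$-absorbing (together with Lemma \ref{l:21}, which makes principal ideals $u$-ideals), then iterate Lemma \ref{l:22} downward from $k=n$ to $k=0$. Your extra care about where the descent starts (treating the all-principal case as the hypothesis for $k=n$ rather than as a separate value $k=n+1$) is exactly the bookkeeping the paper's proof implicitly performs.
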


\begin{proof}
By the definition of $I$, and Lemma \ref{l:21}, the statement holds when $I_1, ..., I_{n+1}$ are all principal ideals. We use Lemma \ref{l:22} to induct down from the case $k=n$ (where we require $k+1$ ideals to be principle) to $k=0$ (where we require no ideals to be principle), which is exactly what we want.
\end{proof}

This allows us to prove the main theorem of this article (Theorem \ref{T:2}).

\textbf{Proof of Theorem \ref{T:2}}: Assume the contrary. Then in some $u$-ring, there are ideals $I,I_1, ..., I_{n+1}$ such that $I$ is $n$-absorbing and $I_1\cdots I_{n+1}\subseteq I$, but $I$ doesn't contain the product of any $n$ of these ideals. But $R$ is a u-ring, and hence $I_1, ..., I_n$ are  $u$-ideals. Lemma \ref{l:23} gives a contradiction.

\begin{remark}
We can alter the proof of Lemma \ref{l:23} above slightly, to get a more general statement when $n=2$. Indeed, notice that if $I=I_1\cup I_2$, then $I=I_1$ or $I_2$ (well-known). Then we can drop the condition of the ideals needing to be $u$-ideals from  Lemma \ref{l:23}, and hence we obtain for arbitrary rings, every $2$-absorbing ideal is strongly $2$-absorbing. This is Theorem \ref{T:1}.
\end{remark}

We can use this to give an alternative proof to corollary 6.9 from \cite{AB}. To achieve that, we cite the following results first.
\begin{proposition}
Every invertible ideal is a $u$-ideal, and a Prüfer domain is a $u$-ring.

\end{proposition}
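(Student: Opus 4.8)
The plan is to prove the two assertions in turn: the statement about invertible ideals is the substantive one, and the statement about Pr\"ufer domains then follows formally, using \cite[Proposition 1.1]{QB} to reduce the $u$-ring property to finitely generated ideals.

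\emph{Invertible ideals are $u$-ideals.} Suppose, toward a contradiction, that $I$ is invertible and $I\subseteq I_1\cup\cdots\cup I_n$ is an efficient covering with $n\ge 2$. Replacing each $I_j$ by $I_j\cap I$ changes neither the efficiency of the covering nor whether $I\subseteq I_j$, so I may assume $I_j\subseteq I$ for all $j$ and $I=I_1\cup\cdots\cup I_n$; efficiency then also gives $I_j\subsetneq I$ for each $j$. If $n=2$, Lemma~\ref{l:1} yields $I=I_1$ or $I=I_2$, contradicting efficiency, so assume $n\ge 3$. A standard avoidance argument --- for each $k$ pick $a_k\in I\setminus\bigcup_{j\ne k}I_j$, and for $x\in\bigcap_{j\ne k}I_j$ locate $a_k+x\in I$ among the $I_m$ --- shows $\bigcap_{j\ne k}I_j\subseteq I_k$, hence $\bigcap_{j\ne k}I_j=\bigcap_jI_j=:D$ for every $k$; and since $I=I_k\cup\bigl(\sum_{j\ne k}I_j\bigr)$ with both of these ideals contained in $I$, Lemma~\ref{l:1} gives $I=\sum_{j\ne k}I_j$ for every $k$.

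Now invertibility enters. Multiplying by the inverse fractional ideal $I^{-1}$ (which is flat, so it is cancellable and commutes with finite intersections), the proper ideals $A_j:=I^{-1}I_j\subsetneq R$ inherit $\sum_{j\ne k}A_j=R$ and $\bigcap_{j\ne k}A_j=\bigcap_jA_j$. When $n=3$ the first relation is exactly pairwise comaximality, so by the Chinese Remainder Theorem $R/\bigl(\bigcap_jA_j\bigr)\cong R/A_1\times R/A_2\times R/A_3$; tracing the equality $A_1\cap A_2=A_1\cap A_2\cap A_3$ through this isomorphism forces $R/A_3=0$, i.e. $A_3=R$, a contradiction. (Alternatively one avoids $I^{-1}$ and localizes: $I_{\mathfrak{m}}$ is invertible over $R_{\mathfrak{m}}$, hence principal, so by Lemma~\ref{l:21} it lies in some $(I_j)_{\mathfrak{m}}$, and the remaining task is to make this index independent of $\mathfrak{m}$.) For the Pr\"ufer domain statement, \cite[Proposition 1.1]{QB} reduces us to finitely generated ideals, and in a Pr\"ufer domain every nonzero finitely generated ideal is invertible --- the zero ideal being a $u$-ideal by Lemma~\ref{l:21} --- so the preceding argument applies.

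The main obstacle I anticipate is passing from $n=3$ to arbitrary $n\ge 3$: the relations $I=\sum_{j\ne k}I_j$ are genuinely pairwise only for $n=3$, and for $n\ge 4$ one must either induct by merging covering ideals (replace $I_3,\dots,I_n$ by their sum, check the resulting three-ideal covering is still efficient, and invoke the $n=3$ case --- or extract a shorter relation and recurse) or, in the localization picture, show the locally chosen index can be taken uniform. It is precisely here that the full strength of ``efficient covering'' must be used, since the ideal-theoretic identities obtained above are, on their own, consistent with $n\ge 4$.
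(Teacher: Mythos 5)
Your argument is correct and complete only for coverings by at most three ideals; for $n\ge 4$ it has a genuine gap, which you yourself flag in the final paragraph. The relations you extract from efficiency and invertibility --- proper ideals $A_1,\dots,A_n\subsetneq R$ with $\sum_{j\ne k}A_j=R$ and $\bigcap_{j\ne k}A_j=\bigcap_jA_j$ for every $k$ --- genuinely do not suffice once $n\ge 4$. For instance, in $R=\mathbb{Z}$ take $A_1=(2\cdot 3\cdot 5)$, $A_2=(2\cdot 7\cdot 11)$, $A_3=(3\cdot 7\cdot 13)$, $A_4=(5\cdot 11\cdot 13)$ (one prime for each pair of indices): every prime divides exactly two of the generators, so any three of the $A_j$ are jointly comaximal and each $A_k$ contains the intersection of the other three, yet all four are proper. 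So no contradiction can be squeezed out of those identities alone, and the missing input must come from a finer use of efficiency or invertibility. The repair you sketch --- merging $I_3,\dots,I_n$ into $J=I_3+\cdots+I_n$ and invoking the three-ideal case --- does not go through as stated, because the merged covering $I\subseteq I_1\cup I_2\cup J$ need not be efficient: you have already shown $I_2+I_3+\cdots+I_n=I$, and nothing prevents $I_3+\cdots+I_n$ itself from equalling $I$, in which case $J=I$ and the three-ideal argument collapses. The localization alternative likewise stops exactly at the hard point (making the index uniform across maximal ideals), which is where the actual content of the theorem lives.

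For comparison: the paper does not prove this proposition at all --- its ``proof'' is a citation of Theorem 1.5 and Corollary 1.6 of Quartararo--Butts, together with their Proposition 1.1 reducing the $u$-ring property to finitely generated ideals. So your write-up is more ambitious than the paper's, and the portion you do complete (the reduction to $I_j\subseteq I$, the avoidance identities, the cancellation of $I^{-1}$ against sums and intersections, and the CRT contradiction for $n=3$) is sound. But as a proof of the stated proposition it is incomplete: either close the induction for $n\ge 4$ (the Quartararo--Butts argument is the standard route) or, as the paper does, cite their Theorem 1.5 for the general case.
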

\begin{proof}
See Theorem 1.5 and Corollary 1.6 from \cite{QB}.
\end{proof}
As a straightforward application of Theorem \ref{T:2}, we recover Anderson-Badawi’s related
result on Prufer domains
\begin{corollary}\label{c:1}
In Prüfer domains,  an $n$-absorbing ideal is strongly $n$-absorbing.
\end{corollary}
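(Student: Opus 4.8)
The statement to prove is Corollary \ref{c:1}: in a Prüfer domain, every $n$-absorbing ideal is strongly $n$-absorbing. The plan is to derive this immediately from the main theorem (Theorem \ref{T:2}), which handles all $u$-rings, together with the cited fact that every Prüfer domain is a $u$-ring. So the work is essentially a one-line deduction once the supporting proposition is in place.

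Concretely, I would first invoke the Proposition stated just above, whose proof references \cite[Theorem 1.5 and Corollary 1.6]{QB}: it asserts both that invertible ideals are $u$-ideals and, more to the point here, that a Prüfer domain is a $u$-ring. Then, given a Prüfer domain $R$, it is in particular a $u$-ring, so Theorem \ref{T:2} applies verbatim: any $n$-absorbing ideal of $R$ is strongly $n$-absorbing. That completes the argument. There is no genuine obstacle at this stage — all the real content has been pushed into Theorem \ref{T:2} (and behind it, Lemmas \ref{l:21}--\ref{l:23}, where the $u$-ideal machinery does the heavy lifting) and into the Quartararo--Butts results being cited.

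If one wanted a more self-contained route that does not black-box \cite{QB} at the level of ``Prüfer domain $\Rightarrow$ $u$-ring,'' the alternative would be to observe that in a Prüfer domain every nonzero finitely generated ideal is invertible, hence (by the first half of the Proposition) a $u$-ideal, and then re-run the proof of Lemma \ref{l:23}: given $I_1\cdots I_{n+1}\subseteq I$ with $I$ $n$-absorbing, one may reduce to finitely generated $I_j$ by a standard argument (if a product of ideals lies in $I$, so does the product of suitable finitely generated subideals generating the same product up to what is needed), each of which is then a $u$-ideal, and conclude by Lemma \ref{l:23}. The delicate point in that variant is the reduction to finitely generated ideals and the handling of the zero ideal, but since the paper already has Theorem \ref{T:2} in hand, the cleanest presentation is simply: Prüfer domain $\Rightarrow$ $u$-ring $\Rightarrow$ apply Theorem \ref{T:2}. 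I expect the main (and only) subtlety is making sure the citation to \cite{QB} genuinely covers the $u$-ring claim for all Prüfer domains, including the non-finitely-generated ideal case, which the Quartararo--Butts reduction to finitely generated ideals indeed secures.
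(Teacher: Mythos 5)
Your proposal matches the paper's own argument exactly: the paper derives Corollary \ref{c:1} as a straightforward application of Theorem \ref{T:2} combined with the preceding Proposition (citing \cite{QB}) that every Prüfer domain is a $u$-ring. The additional self-contained variant you sketch is not needed and is not what the paper does, but your primary one-line deduction is precisely the intended proof.
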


 Lastly, to ensure that u-rings is strictly larger that the class of  Prüfer domains, we prove the following lemma which provides an example of one such family of u-rings. A more general result, proved in the same way, can be found in \cite{QB}.
\begin{lemma}\label{l:25}
Suppose $R$ is a ring with $\mathbb{Q}\subseteq R$. Then $R$ is a $u$-ring.
\end{lemma}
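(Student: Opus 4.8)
The goal is to show that if $\mathbb{Q} \subseteq R$, then $R$ is a $u$-ring, i.e. whenever a finitely generated ideal is contained in a finite union of ideals, it is contained in one of them. By Quartararo–Butts's reduction it suffices to handle finitely generated $I$; in fact, arguing by induction it is enough to rule out an \emph{efficient} covering $I \subseteq I_1 \cup \cdots \cup I_n$ with $n \geq 2$. So the plan is: suppose $I \subseteq I_1 \cup \cdots \cup I_n$ is an efficient covering with $n \geq 2$; I want to derive a contradiction.

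The key classical fact about efficient coverings (from the $G$ reference, or provable directly) is that if $I \subseteq I_1 \cup \cdots \cup I_n$ is an efficient covering, then $I_i \cap I \not\subseteq I_j$ for $i \neq j$ and, more usefully, for each $i$ there is an element $x_i \in (I \cap I_i) \setminus \bigcup_{j \neq i} I_j$. The standard trick is then to form, for $n \geq 3$, the element $x_1 + x_2 x_3 \cdots$ or similar and show it lies outside all $I_j$; but here we can be cleaner because $\mathbb{Q} \subseteq R$ means every positive integer is a unit. I would pick $x_1 \in (I\cap I_1)\setminus\bigcup_{j\geq 2}I_j$ and $x_2 \in (I\cap I_2)\setminus\bigcup_{j\neq 2}I_j$, and then consider the $n-1$ elements $x_1 + x_2,\ x_1 + 2x_2,\ \dots,\ x_1 + (n-1)x_2$ — wait, better: consider $n$ elements of the form $x_1 + k x_2$ for $k = 0, 1, \dots, n-1$, all of which lie in $I$ (since $x_1, x_2 \in I$), hence each lies in some $I_{j(k)}$. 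None of them can lie in $I_1$ (else $k x_2 \in I_1$ for $k \geq 1$, and since $k$ is a unit, $x_2 \in I_1$, contradiction — and for $k=0$, $x_1 \in I_1$ is fine, so handle $k=0$ separately) and none can lie in $I_2$ (else $x_1 \in I_2$ for $k\geq 1$; contradiction). So the $n-1$ elements $x_1 + x_2, \dots, x_1 + (n-1)x_2$ are distributed among $I_3, \dots, I_n$, which is only $n-2$ ideals; by pigeonhole two of them, say $x_1 + kx_2$ and $x_1 + \ell x_2$ with $k \neq \ell$, lie in the same $I_j$. Their difference $(k - \ell) x_2 \in I_j$ with $k - \ell$ a nonzero integer hence a unit, so $x_2 \in I_j$ with $j \geq 3$, contradicting the choice of $x_2$. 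This forces $n \leq 2$.

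It remains to dispatch $n = 2$, which is exactly Lemma~\ref{l:1}: if $I \subseteq I_1 \cup I_2$ then $I = (I \cap I_1) \cup (I \cap I_2)$ is a union of two ideals, hence equals one of them, so $I \subseteq I_1$ or $I \subseteq I_2$ — so there is no efficient covering with $n = 2$ either. Therefore no finitely generated ideal of $R$ admits an efficient covering by more than one ideal, so by \cite[Proposition 1.1]{QB} the ring $R$ is a $u$-ring.

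The main obstacle, and the only place the hypothesis $\mathbb{Q} \subseteq R$ is genuinely used, is guaranteeing that the integers $k$ and $k - \ell$ appearing above are invertible in $R$ so that we may cancel them; without this the elements $x_1 + k x_2$ could legitimately fall into $I_1$ or $I_2$, or the pigeonhole collision could fail to yield $x_2 \in I_j$. A secondary point requiring a little care is the base reduction: one should note that a minimal efficient covering exists once any covering by more than one ideal exists (discard redundant $I_j$'s), so that the pigeonhole argument applies; and one should not forget to separate the $k = 0$ term, which contributes nothing beyond $x_1 \in I_1$, so effectively only $n-1$ elements must be placed into $I_3, \dots, I_n$. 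Everything else — that $(I \cap I_i)$ is an ideal, that efficient coverings give the separating elements $x_i$ — is routine.
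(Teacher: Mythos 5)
Your proposal is correct and follows essentially the same route as the paper: take separating elements $x_1, x_2$ from an efficient covering, form the elements $x_1 + k x_2$, use the invertibility of nonzero integers (the only place $\mathbb{Q} \subseteq R$ enters) to exclude $I_1$ and $I_2$, and apply a pigeonhole argument to force $x_2$ into some $I_j$ with $j \neq 2$, a contradiction. Your version is simply more careful than the paper's (explicitly excluding $k = 0$, counting $n-1$ elements against $n-2$ ideals rather than invoking infinitely many $k$, and dispatching $n = 2$ separately), but it is not a different argument.
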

\begin{proof}
Let $I = I_1\cup \dots\cup I_n$ be an efficient covering of $I$. Take $a_1\in I_1$ with $a_1\not\in I_j$ for $j\neq 1$. Choose $a_2$ analogously. Then for all $k\in \mathbb{Z}$, $a_1+ka_2\not\in I_1, I_2$. Since there are infinite possibilities for $k$, there will be $a_1+ka_2$ and $a_1+la_2$ in the same $I_j$. But then $(k-l)a_2\in I_j$, so $a_2\in I_j$ for $j\neq 2$, contradiction.
\end{proof}
The following is an example of a u-ring which is not a   domain, and hence not a  Prüfer domain. 
\begin{example}\label{e:1}
$\mathbb{Q} \times \mathbb{Q}$ is a ring with zero divisors (not domain) which contains $\mathbb{Q} \cong {0} \times \mathbb{Q}$ as a subring. Consequently, by Lemma \ref{l:25},  $\mathbb{Q} \times \mathbb{Q}$ is a u-ring. 
\end{example}

\textbf{Acknowledgements}. We are grateful to the Undergraduate Research Opportunities Program at MIT (UROP) as well as to  the J-WEL Grant in Higher Education Innovation, “Educational Innovation in Palestine,”  for providing and funding this research opportunity. Also, we would like to thank  Professor
Haynes Miller for his crucial role in  mentoring this project.


\end{document}